\newtheorem{lemma}{Lemma}
\newtheorem{theorem}{Theorem}
\newtheorem{remark}{Remark}
\begin{document}

\begin{center} {\bf ON TRANSPARENT BOUNDARY CONDITIONS FOR THE
HIGH-ORDER HEAT EQUATION}
\end{center}

\vspace{5mm}

\begin{center}
{\bf Durvudkhan Suragan, Niyaz Tokmagambetov}
\end{center}

\vspace{1mm}

\begin{center}
Almaty, Kazakhstan
\end{center}

\vspace{4mm}

{\bf Abstract.} In this paper we develop an artificial initial
boundary value problem for the high-order heat equation in a
bounded domain $\Omega$. It is found an unique classical solution
of this problem in an explicit form and shown that the solution of
the artificial initial boundary value problem is equal to the
solution of the infinite problem (Cauchy problem) in $\Omega$.

\vspace{4mm}

 {\bf keywords.} transparent boundary conditions \*\
an artificial initial boundary value problem \*\ a high-order
parabolic equation

\vspace{4mm}

{\bf MSC}:{AMS Mathematics Subject Classification (2000) numbers:
35K35}

\vspace{4mm}

\section{Introduction}

When computing the solution of a partial differential equation in
an unbounded domain, one often introduces artificial boundaries.
In order to limit the computational cost, these boundaries must be
chosen not too far from the domain of interest. Therefore, the
boundary conditions must be good approximations to the so-called
transparent boundary condition (i.e., such that the solution of
the problem in the bounded domain is equal to the solution in the
original domain).

One of the numerical methods for the solution of problems in
unbounded domains is the Dirichlet-to-Neumann (DtN) Finite
Elements Method. Its name comes from the fact that it involves the
nonlocal Dirichlet-to-Neumann (DtN) map on an artificial boundary
which encloses the computational domain. Originally the method has
been developed for the solution of linear elliptic problems, such
as wave scattering problems governed by Helmholtz equation or by
the equations of time-harmonic elasticity. Recently, the method
has been extended in a number of directions, and further analyzed
and improved, by the D. Givoli \cite{12} and others. In
\cite{12}-\cite{16} it can be find more references to various
numerical methods associated with solving similar problems.

This question is of crucial interest in such different areas as
geophysics, plasma physics, fluid dynamics (see \cite{1} -
\cite{5}).

The need for practical transparent (artificial) boundary
conditions combining efficiency and simplicity is evident. Such
conditions must satisfy several criteria: (i) The resulting
initial boundary value problem should be unique and stable; (ii)
the solution to the initial boundary value problem should coincide
or closely approximate the solution of the infinite problem on the
definitional domain of the boundary value problem; and (iii) the
conditions must allow for an analytical solution or an efficient
numerical implementation.

In this paper we consider artificial boundary conditions for the
high-order Cauchy problem for the heat equation. The conditions
satisfy the above-mentioned criteria (i), (ii) and (iii). Similar
results were taken for the Laplace equation in \cite{6} and for
high-order Laplace equation (polyharmonic equations) in \cite{7,
17}. And a transfer of the Sommerfeld radiation condition to a
boundary of bounded domains for the Helmholtz operator is studied
in work \cite{18}.

The transparent boundary condition is usually an integral relation
in time and space between $u$ and its normal derivative on the
boundary, which makes it impractical from a numerical point of
view.
 Alternatively, the requirement for boundary conditions can
be avoided when the solution of a partial differential equation is
approximated in the form of convolution in space and time with the
fundamental solution. An efficient approximation of this type for
the heat equation is proposed in \cite{8}.

\vspace{5mm}

\section{Main results}

In what follows, we shall need in definitions of the Holder spaces
(see, e.g., \cite{9}, pp. 33-34 and 117-118).

On a cylindrical domain  $\Omega\equiv Q\times(0,T)$, where
$Q\subset \mathbb R^{n}, n\in \mathbb N$ is a simply-connected
bounded domain with a sufficiently smooth boundary $\partial Q$,
we consider the following heat potential
\begin{equation}
u(x,t)=\int^{t}_{0}\int_{Q}\varepsilon_{m,n}(x-\xi,
t-\tau)f(\xi,\tau)d\xi d\tau, \label{eq-11}
\end{equation}
where $\varepsilon_{m,n}(x,t)=\frac{\theta(t)t^{m-1}}{(2\sqrt{\pi
t})^{n}}\exp(-\frac{\mid x\mid^{2}}{4t}), m\in \mathbb N$ is a
fundamental solution of the Cauchy problem for the high-order heat
equation, i.e. this solves the following high-order heat equation
$$
\diamondsuit_{x,t}^{m}\varepsilon_{m,n}(x-\xi,
t-\tau)=(\frac{\partial}{\partial
t}-\Delta_{x})^{m}\varepsilon_{m,n}(x-\xi,t-\tau)=0
$$
and its adjoint
$$
(\diamondsuit_{\xi,\tau}^{+})^{m}\varepsilon_{m,n}(x-\xi,
t-\tau)=(-\frac{\partial}{\partial
\tau}-\Delta_{\xi})^{m}\varepsilon_{m,n}(x-\xi,t-\tau)=0
$$
for all $t>\tau$ and $\xi,x\in R^{n}.$

Indeed, it's not difficult to check that for $m>1$
$$
\diamondsuit_{x,t}\varepsilon_{m,n}(x-\xi,
t-\tau)=\varepsilon_{m-1,n}(x-\xi, t-\tau)
$$
and
$$
\diamondsuit_{\xi,\tau}^{+}\varepsilon_{m,n}(x-\xi,
t-\tau)=\varepsilon_{m-1,n}(x-\xi,t-\tau).
$$

From [17, chapter 1] for $x\in Q$ the condition
$$
\lim_{\tau\rightarrow
t}\int_{Q}(\diamondsuit_{\xi,\tau}^{+})^{m-1}\varepsilon_{m,n}(x-\xi,
t-\tau)f(\xi,\tau)d\xi=f(x,t)
$$
is valid.

It is easy to see that if $f(x,t)\in
C^{\gamma,\frac{\gamma}{2}}_{x,t}(\overline{\Omega})$ then
$u(x,t)\in
C^{2m+\gamma,m+\frac{\gamma}{2}}_{x,t}(\overline{\Omega})$, where
$0<\gamma<1$ and
\begin{equation}
\diamondsuit^{m}u(x,t)=f(x,t), (x,t)\in\Omega, \label{eq-12}
\end{equation}
with initial conditions
\begin{equation}
\frac{\partial^{i}u(x,t)}{\partial t^{i}}\mid_{t=0}=0,
i=\overline{0,m-1}. \label{eq-13}
\end{equation}
We note that if the function $u(x,t)\in
C^{2m+\gamma,m+\frac{\gamma}{2}}_{x,t}(\overline{\Omega})$
satisfies the conditions (\ref{eq-13}) then it will satisfy to the
following conditions too
$$\diamondsuit^{i}u(x,t)\mid_{t=0}=0, i=\overline{0, m-1}.$$

Consider the problem (\ref{eq-12})-(\ref{eq-13}) with non-local
type boundary conditions

\begin{multline}
I^{k}_{u}(x,t)\mid_{x\in\partial Q}\equiv-\frac{\diamondsuit^{k}
u(x,t)}{2}\\
+\sum_{i=0}^{m-k-1}[\int^{t}_{0}\int_{\partial Q}\frac{\partial
(\diamondsuit^{+}_{\xi,\tau})^{i+k}\varepsilon_{m,n}(x-\xi,
t-\tau)}{\partial n_{\xi}}\diamondsuit^{m-i-1}
u(\xi,\tau)dS_{\xi}d\tau \\
-\int^{t}_{0}\int_{\partial Q}\frac{\partial \diamondsuit^{m-i-1}
u(\xi,\tau)}{\partial
n_{\xi}}(\diamondsuit^{+}_{\xi,\tau})^{i+k}\varepsilon_{m,n}(x-\xi,
t-\tau)dS_{\xi}d\tau]=0, \\
k=\overline{0,m-1}, (x,t)\in \partial Q\times (0,T), \label{eq-14}
\end{multline}

where $\frac{\partial}{\partial n_{\xi}}$ denotes the exterior
normal derivative on the boundary $\partial Q$.

\vspace{5mm}

\begin{theorem} \label{th-2} The heat potential for the high-order heat
equation (\ref{eq-12}) is an unique classical solution of  the
non-local type initial boundary value problem
(\ref{eq-12})-(\ref{eq-14}).
\end{theorem}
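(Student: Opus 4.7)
The plan is to establish the theorem in two stages: first verify that the heat potential $u$ defined by (\ref{eq-11}) satisfies (\ref{eq-12})--(\ref{eq-14}), then prove uniqueness of the classical solution. The equation (\ref{eq-12}) and the initial conditions (\ref{eq-13}) are essentially already argued in the excerpt: differentiating (\ref{eq-11}) under the integral sign, iterating $\diamondsuit_{x,t}\varepsilon_{m,n}=\varepsilon_{m-1,n}$, and invoking the stated delta-like limit as $\tau\to t$ yield $\diamondsuit^m u = f$, while the factor $\theta(t-\tau)(t-\tau)^{m-1}$ in $\varepsilon_{m,n}$ forces $\partial_t^i u|_{t=0}=0$ for $0\le i\le m-1$.

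The main step is to verify (\ref{eq-14}). I would derive a Green-type second identity for $\diamondsuit^m$ on the cylinder $Q\times(0,t)$ by iterating the single-step Green identity for $\partial_t-\Delta$:
\[
\int_0^t\!\int_Q\bigl[v\diamondsuit^m u - u(\diamondsuit^+)^m v\bigr]d\xi d\tau = \sum_{i=0}^{m-1}\int_Q\bigl[(\diamondsuit^+)^i v\cdot\diamondsuit^{m-i-1}u\bigr]_{\tau=0}^{\tau=t}d\xi - \sum_{i=0}^{m-1}\int_0^t\!\int_{\partial Q}\!\bigl[(\diamondsuit^+)^i v\,\partial_{n_\xi}\diamondsuit^{m-i-1}u - \partial_{n_\xi}(\diamondsuit^+)^i v\cdot\diamondsuit^{m-i-1}u\bigr]dS_\xi d\tau.
\]
Substituting $v(\xi,\tau)=(\diamondsuit^+_{\xi,\tau})^k\varepsilon_{m,n}(x-\xi,t-\tau)=\varepsilon_{m-k,n}(x-\xi,t-\tau)$ for $x$ in the interior of $Q$, integrating up to $\tau=t-\epsilon$ and sending $\epsilon\to 0$: the initial contribution vanishes by (\ref{eq-13}); the $\tau=t-\epsilon$ contribution collapses by the delta-like limit of the standard heat kernel (only the summand $i=m-k-1$, where $(\diamondsuit^+)^{m-1}\varepsilon_{m,n}=\varepsilon_{1,n}$, survives and produces $\diamondsuit^k u(x,t)$, while the terms with $m-k-i\ge 2$ vanish because of the $(t-\tau)^{m-k-i-1}$ factor in $\varepsilon_{m-k-i,n}$); on the left-hand side, $(\diamondsuit^+)^m v\equiv 0$ for $\tau<t$ and $\diamondsuit^m u=f$, so it reduces to $\int\!\!\int\varepsilon_{m-k,n}f\,d\xi d\tau=\diamondsuit^k u(x,t)$. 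The two copies of $\diamondsuit^k u(x,t)$ cancel and the boundary sum vanishes identically for every $x\in Q$. Passing $x\to\partial Q$ from inside, only the $i=m-k-1$ summand contributes a jump (the others carry the smoothing factor above), and the classical parabolic double-layer jump supplies precisely $-\tfrac12\diamondsuit^k u(x,t)$, so the identity becomes $I^k_u(x,t)=0$ on $\partial Q\times(0,T)$.

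For uniqueness, let $w$ be the difference of two classical solutions, so $\diamondsuit^m w=0$, $\partial_t^i w|_{t=0}=0$, and $I^k_w=0$. The same Green identity applied to $w$ with $v=\varepsilon_{m-k,n}$ yields, for $x\in Q$, the relation $(\text{boundary sum with }u\mapsto w)=\diamondsuit^k w(x,t)$. Taking $x\to\partial Q$ and applying the same jump relation, the direct boundary value of this sum at $x_0\in\partial Q$ equals $\tfrac12\diamondsuit^k w(x_0,t)$; plugging into the hypothesis $I^k_w(x_0,t)=0$ forces $\diamondsuit^k w=0$ on $\partial Q\times(0,T)$ for every $k=0,\dots,m-1$. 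Combined with the induced initial data $\diamondsuit^k w|_{t=0}=0$ (noted in the excerpt as a consequence of (\ref{eq-13})), these Dirichlet plus initial conditions on $\diamondsuit(\diamondsuit^{m-1}w)=0$ force $\diamondsuit^{m-1}w\equiv 0$ by standard uniqueness for the first-order heat equation; cascading down through $\diamondsuit^{m-2}w,\dots,\diamondsuit^0 w = w$ gives $w\equiv 0$ on $\Omega$.

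The principal technical obstacle I anticipate is a careful justification of the jump relations for the higher-order parabolic double-layer potentials $\int\partial_{n_\xi}(\diamondsuit^+_{\xi,\tau})^{i+k}\varepsilon_{m,n}\cdot\phi\,dS d\tau$: although the identity $(\diamondsuit^+)^j\varepsilon_{m,n}=\varepsilon_{m-j,n}$ makes each kernel a true fundamental solution of a reduced-order heat operator (so classical parabolic potential theory in principle transfers), one must verify that for $j\ge 2$ the kernels $\varepsilon_{j,n}$ carry enough $(t-\tau)^{j-1}$ smoothing to eliminate the jump of their double-layer, and that the $\tfrac12$-jump of the $j=1$ (standard) double-layer matches with the correct sign the $-\tfrac12\diamondsuit^k u$ appearing in (\ref{eq-14}). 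A secondary technical point is verifying that the iterated integrations by parts in the Green identity are legitimate up to $\partial Q$ and up to $\tau=0$ in the Holder class $C^{2m+\gamma,m+\gamma/2}(\overline{\Omega})$ assumed on the solution.
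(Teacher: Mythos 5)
Your proposal is correct and follows essentially the same route as the paper: the iterated Green identity for $\diamondsuit^m$ with $v=(\diamondsuit^+_{\xi,\tau})^k\varepsilon_{m,n}=\varepsilon_{m-k,n}$ shows the boundary sum vanishes for interior $x$, the parabolic double-layer jump then yields $I^k_u=0$ on $\partial Q\times(0,T)$, and uniqueness is obtained by the same interior identity plus jump relation reducing the difference to a cascade of homogeneous Dirichlet heat problems killed by the maximum principle. The technical points you flag (only the $j=1$ kernel jumps, the others being smoothed by the $(t-\tau)^{j-1}$ factor) are exactly what the paper delegates to the cited potential-theory references.
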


\begin{proof} The following equalities are valid
$$
(\diamondsuit_{\xi,\tau}^{+})^{m}\varepsilon_{m,n}(x-\xi,
t-\tau)=(\diamondsuit_{\xi,\tau}^{+})^{m-1}\varepsilon_{m-1,n}(x-\xi,
t-\tau)=...=\diamondsuit_{\xi,\tau}^{+}\varepsilon_{1,n}(x-\xi,
t-\tau)=0,
$$
for all $t>\tau$ and $\xi,x\in R^{n}$,
$$
\lim_{\tau\rightarrow
t}(\diamondsuit_{\xi,\tau}^{+})^{k}\varepsilon_{m,n}(x-\xi,
t-\tau)=0,
$$
for $k<m-1$. Therefore
$$
\lim_{\alpha\rightarrow
0+0}\int_{Q}(\diamondsuit^{+}_{\xi,\tau})^{k}\varepsilon_{m,n}(x-\xi,
\alpha)u(\xi,t-\alpha)d\xi=0
$$
for $k<m-1$ and
$$
\int^{t}_{0}\int_{Q}(\diamondsuit^{+}_{\xi,\tau})^{m}\varepsilon_{m,n}(x-\xi,
t-\tau)u(\xi,\tau)d\xi d\tau=\lim_{\alpha\rightarrow
0+0}\int^{t-\alpha}_{0}\int_{Q}(\diamondsuit^{+}_{\xi,\tau})^{m}\varepsilon_{m,n}(x-\xi,
t-\tau)u(\xi,\tau)d\xi d\tau
$$
$$
=\lim_{\alpha\rightarrow
0+0}\int^{t-\alpha}_{0}\int_{Q}0u(\xi,\tau)d\xi d\tau=0.
$$

We assume that $u(x,t)\in
C^{2m+\gamma,m+\frac{\gamma}{2}}_{x,t}(\overline{\Omega})$. Let us
denote $\diamondsuit^{0}\equiv I,$ where $I$ is identity operator.
A direct calculation shows that
$$
u(x,t)=\lim_{\alpha\rightarrow
0+0}\int^{t-\alpha}_{0}\int_{Q}\varepsilon_{m,n}(x-\xi,
t-\tau)f(\xi,\tau)d\xi d\tau
$$
$$
=\lim_{\alpha\rightarrow
0+0}\int^{t-\alpha}_{0}\int_{Q}\varepsilon_{m,n}(x-\xi,
t-\tau)\diamondsuit (\diamondsuit^{m-1} u(\xi,\tau))d\xi d\tau
$$
$$
=\lim_{\alpha\rightarrow 0+0}[\int_{Q}\varepsilon_{m,n}(x-\xi,
t-\tau)\diamondsuit^{m-1} u(\xi,\tau)\mid^{t-\alpha}_{0}d\xi$$
$$
-\int^{t-\alpha}_{0}\int_{Q}\frac{\partial
\varepsilon_{m,n}(x-\xi, t-\tau)}{\partial\tau}\diamondsuit^{m-1}
u(\xi,\tau)d\xi d\tau
$$
$$
-\int^{t-\alpha}_{0}\int_{Q}\varepsilon_{m,n}(x-\xi,
t-\tau)\Delta_{\xi}(\diamondsuit^{m-1} u(\xi,\tau))d\xi d\tau]
$$
$$=\lim_{\alpha\rightarrow
0+0}[\int^{t-\alpha}_{0}\int_{Q}\diamondsuit^{+}_{\xi,\tau}\varepsilon_{m,n}(x-\xi,
t-\tau)\diamondsuit^{m-1}u(\xi,\tau)d\xi d\tau$$
$$+\int^{t-\alpha}_{0}\int_{\partial Q}\frac{\partial
\varepsilon_{m,n}(x-\xi, t-\tau)}{\partial
n_{\xi}}\diamondsuit^{m-1} u(\xi,\tau)dS_{\xi} d\tau$$
$$-\int^{t-\alpha}_{0}\int_{\partial Q}\frac{\partial
\diamondsuit^{m-1} u(\xi,\tau)}{\partial
n_{\xi}}\varepsilon_{m,n}(x-\xi, t-\tau)dS_{\xi} d\tau]=...$$
$$=\lim_{\alpha\rightarrow
0+0}[\int_{Q}(\diamondsuit^{+})^{m-1}\varepsilon_{m,n}(x-\xi,
t-\tau) u(\xi,\tau)\mid^{t-\alpha}_{0}d\xi$$
$$+\int^{t-\alpha}_{0}\int_{Q}(\diamondsuit^{+}_{\xi,\tau})^{m}\varepsilon_{m,n}(x-\xi,
t-\tau)u(\xi,\tau)d\xi d\tau]$$
$$+\sum_{i=0}^{m-1}[\int^{t}_{0}\int_{\partial Q}\frac{\partial
(\diamondsuit^{+}_{\xi,\tau})^{i}\varepsilon_{m,n}(x-\xi,
t-\tau)}{\partial n_{\xi}}\diamondsuit^{m-i-1} u(\xi,\tau)dS_{\xi}
d\tau$$
$$-\int^{t}_{0}\int_{\partial Q}\frac{\partial \diamondsuit^{m-i-1}
u(\xi,\tau)}{\partial
n_{\xi}}(\diamondsuit^{+}_{\xi,\tau})^{i}\varepsilon_{m,n}(x-\xi,
t-\tau)dS_{\xi}d\tau]$$
$$=u(x,t)+\sum_{i=0}^{m-1}[\int^{t}_{0}\int_{\partial
Q}\frac{\partial
(\diamondsuit^{+}_{\xi,\tau})^{i}\varepsilon_{m,n}(x-\xi,
t-\tau)}{\partial n_{\xi}}\diamondsuit^{m-i-1} u(\xi,\tau)dS_{\xi}
d\tau$$
$$-\int^{t}_{0}\int_{\partial Q}\frac{\partial \diamondsuit^{m-i-1}
u(\xi,\tau)}{\partial
n_{\xi}}(\diamondsuit^{+}_{\xi,\tau})^{i}\varepsilon_{m,n}(x-\xi,
t-\tau)dS_{\xi}d\tau]$$ for any $(x,t)\in\Omega$.

Here we get
\begin{multline}
\sum_{i=0}^{m-1}[\int^{t}_{0}\int_{\partial Q}\frac{\partial
(\diamondsuit^{+}_{\xi,\tau})^{i}\varepsilon_{m,n}(x-\xi,
t-\tau)}{\partial n_{\xi}}\diamondsuit^{m-i-1} u(\xi,\tau)dS_{\xi}
d\tau\\
-\int^{t}_{0}\int_{\partial Q}\frac{\partial \diamondsuit^{m-i-1}
u(\xi,\tau)}{\partial
n_{\xi}}(\diamondsuit^{+}_{\xi,\tau})^{i}\varepsilon_{m,n}(x-\xi,
t-\tau)dS_{\xi}d\tau]=0,\forall(x,t)\in\Omega. \label{eq-15}
\end{multline}

Applying properties of the double layer and single layer
potentials  (see \cite{10} or \cite{19}) to (\ref{eq-15}) as
$(x,t)\rightarrow
\partial Q\times(0,T)$, we obtain
\begin{align*}
-\frac{u(x,t)}{2}+\sum_{i=0}^{m-1}[\int^{t}_{0}\int_{\partial
Q}\frac{\partial
(\diamondsuit^{+}_{\xi,\tau})^{i}\varepsilon_{m,n}(x-\xi,
t-\tau)}{\partial n_{\xi}}\diamondsuit^{m-i-1} u(\xi,\tau)dS_{\xi}
d\tau\\
-\int^{t}_{0}\int_{\partial Q}\frac{\partial \diamondsuit^{m-i-1}
u(\xi,\tau)}{\partial
n_{\xi}}(\diamondsuit^{+}_{\xi,\tau})^{i}\varepsilon_{m,n}(x-\xi,
t-\tau)dS_{\xi}d\tau]=0,\forall(x,t)\in\partial Q\times(0,T).
\end{align*}

Applying the differential expression $\diamondsuit^{k}_{x, t},\,\,
k=\overline{1,m-1}$ to $u(x,t),$ we get
$$
\diamondsuit^{k}_{x, t}u(x,t)=\diamondsuit^{k}_{x,
t}\left(\lim_{\alpha\rightarrow
0+0}\int^{t-\alpha}_{0}\int_{Q}\varepsilon_{m,n}(x-\xi,
t-\tau)f(\xi,\tau)d\xi d\tau\right)
$$

$$
=\diamondsuit^{k-1}_{x, t}\Bigl(\lim_{\alpha\rightarrow
0+0}\int_{Q}\varepsilon_{m,n}(x-\xi, \alpha)f(\xi,t-\alpha)d\xi
$$
$$+\lim_{\alpha\rightarrow
0+0}\int^{t-\alpha}_{0}\int_{Q}\varepsilon_{m-1,n}(x-\xi,
t-\tau)f(\xi,\tau)d\xi d\tau\Bigr)
$$
$$
=\diamondsuit^{k-1}_{x, t}\left(\lim_{\alpha\rightarrow
0+0}\int^{t-\alpha}_{0}\int_{Q}\varepsilon_{m-1,n}(x-\xi,
t-\tau)f(\xi,\tau)d\xi d\tau\right)=...
$$
$$
=\lim_{\alpha\rightarrow
0+0}\int^{t-\alpha}_{0}\int_{Q}\varepsilon_{m-k,n}(x-\xi,
t-\tau)f(\xi,\tau)d\xi d\tau
$$
and after similar calculations as above, we get the following
boundary conditions for each $k$

\begin{align*}
I^{k}_{u}(x,t)\equiv-\frac{\diamondsuit^{k}u(x,t)}{2}+\sum_{i=0}^{m-k-1}[\int^{t}_{0}\int_{\partial
Q}\frac{\partial
(\diamondsuit^{+}_{\xi,\tau})^{i+k}\varepsilon_{m,n}(x-\xi,
t-\tau)}{\partial n_{\xi}}\diamondsuit^{m-i-1} u(\xi,\tau)dS_{\xi}
d\tau\\
-\int^{t}_{0}\int_{\partial Q}\frac{\partial \diamondsuit^{m-i-1}
u(\xi,\tau)}{\partial
n_{\xi}}(\diamondsuit^{+}_{\xi,\tau})^{i+k}\varepsilon_{m,n}(x-\xi,
t-\tau)dS_{\xi}d\tau]=0\\
\forall(x,t)\in\partial Q\times(0,T),k=\overline{0,m-1}.
\end{align*}

Thus, the heat potential (\ref{eq-11}) satisfies the boundary
conditions (\ref{eq-14}).

Conversely, if a function $u_{1}(x,t)\in
C^{2m+\gamma,m+\frac{\gamma}{2}}_{x,t}(\overline{\Omega})$
satisfies equation (\ref{eq-12}), the initial condition
(\ref{eq-13}) and boundary conditions (\ref{eq-14}) then
$u_{1}(x,t)=u(x,t)$, where $u(x,t)$ is the heat potential
(\ref{eq-11}). If this is not so, then a function
$v(x,t)=u_{1}(x,t)-u(x,t)$ satisfies the homogeneous equation
\begin{equation}
\diamondsuit^{m}v(x,t)=0, (x,t)\in\Omega, \label{eq-16}
\end{equation}
with the initial conditions
\begin{equation}
\diamondsuit^{k}v(x,t)\mid_{t=0}=0, x\in\Omega,
k=\overline{0,m-1}, \label{eq-17}
\end{equation}
and  the boundary conditions
\begin{equation}
I^{k}_{v}(x,t)=0,\forall(x,t)\in\partial
Q\times(0,T),k=\overline{0,m-1}. \label{eq-18}
\end{equation}
Where we use the following notations
\begin{multline}
I^{k}_{v}(x,t)\equiv\sum_{i=0}^{m-k-1}[\int^{t}_{0}\int_{\partial
Q}\frac{\partial
(\diamondsuit^{+}_{\xi,\tau})^{i+k}\varepsilon_{m,n}(x-\xi,
t-\tau)}{\partial n_{\xi}}\diamondsuit^{m-i-1} v(\xi,\tau)dS_{\xi}
d\tau\\
-\int^{t}_{0}\int_{\partial Q}\frac{\partial \diamondsuit^{m-i-1}
v(\xi,\tau)}{\partial
n_{\xi}}(\diamondsuit^{+}_{\xi,\tau})^{i+k}\varepsilon_{m,n}(x-\xi,
t-\tau)dS_{\xi}d\tau]=0,
\\
\forall(x,t)\in\Omega,k=\overline{0,m-1}. \label{eq-19}
\end{multline}
On the other hand, by using (\ref{eq-16}) and (\ref{eq-17}), we
obtain following equalities
$$
0=\int^{t}_{0}\int_{Q}\varepsilon_{m,n}(x-\xi, t-\tau)\cdot0d\xi
d\tau$$
$$=\int^{t}_{0}\int_{Q}\varepsilon_{m,n}(x-\xi,
t-\tau)\diamondsuit^{m-k} (\diamondsuit^{k} v(\xi,\tau))d\xi d\tau
$$
$$
=\diamondsuit^{k} v(x,t)+I_{v}^{k}(x,t),
\forall(x,t)\in\Omega,k=\overline{0,m-1}.
$$

Applying properties of the double layer potential (see \cite{10},
\cite{11} or \cite{19}) to (\ref{eq-19}) as $(x,t)\rightarrow
\partial Q\times(0,T)$, we have
$$
\diamondsuit^{k}v(x,t)\mid_{\partial
Q\times(0,T)}=-I^{k}_{v}(x,t)\mid_{\partial
Q\times(0,T)}=0,k=\overline{0,m-1}.
$$
I.e. the problem (\ref{eq-16})-(\ref{eq-18}) is equivalent to
\begin{equation}
\diamondsuit^{m}v(x,t)=0, (x,t)\in\Omega, \label{eq-20}
\end{equation}
with the initial conditions
\begin{equation}
\diamondsuit^{k}v(x,t)\mid_{t=0}=0, x\in Q, k=\overline{0,m-1},
\label{eq-21}
\end{equation}
and the boundary conditions
\begin{equation}
\diamondsuit^{k}v(x,t)\mid_{\partial
Q\times(0,T)}=0,k=\overline{0,m-1}. \label{eq-22}
\end{equation}
From (\ref{eq-20})-(\ref{eq-22}) with $k=m-1$ using the Maximum
Principle we get
$$\diamondsuit^{m-1}v(x,t)=0, (x,t)\in\Omega.$$
And repeating the similar calculations we have
$$\diamondsuit^{i} v(x,t)=0,\forall(x,t)\in
\bar{\Omega},i=\overline{0,m-1},$$ i.e.,
$v(x,t)=u_{1}(x,t)-u(x,t)$ and $u_{1}(x,t)=u(x,t)$.

This completes the proof of Theorem \ref{th-2}.
\end{proof}

\vspace{5mm}

\begin{remark}\label{rem-1}
The heat kernel, i.e. the fundamental solution of the high-order
heat equation $\varepsilon_{m,n}(x,t)$ is the Green function for
the non-local type boundary value problem
(\ref{eq-12})-(\ref{eq-14}).
\end{remark}

Let us consider the heat equation
\begin{equation}
\diamondsuit u(x,t)=f(x,t), (x,t)\in\Omega, \label{eq-b15}
\end{equation}
with initial condition
\begin{equation}
u(x,0)=0 \label{eq-b16}
\end{equation}
and with inhomogeneous non-local type boundary condition
\begin{multline}
-\frac{u(x,t)}{2}+\int^{t}_{0}\int_{\partial Q}\frac{\partial
\varepsilon_{1,n}(x-\xi, t-\tau)}{\partial n_{\xi}}
u(\xi,\tau)dS_{\xi}d\tau \\
-\int^{t}_{0}\int_{\partial Q}\frac{\partial u(\xi,\tau)}{\partial
n_{\xi}}\varepsilon_{1,n}(x-\xi, t-\tau)dS_{\xi}d\tau=\varphi(x,
t),
\\ (x,t)\in
\partial Q\times (0,T), \label{eq-b17}
\end{multline}
where $\varphi(x,t)\in
C^{2+\gamma,1+\frac{\gamma}{2}}_{x,t}(\overline{\partial
Q}\times[0, T]),$ $\varphi(x,0)=0, x\in\partial Q$ and
$\frac{\partial}{\partial n_{\xi}}$ denotes the exterior normal
derivative on the boundary $\partial Q$.

\vspace{5mm}

\begin{remark}\label{rem-2}
In case $\varphi(x, t)\equiv 0,$ from the theorem \ref{th-2}
follows that an unique solution of the problem
(\ref{eq-b15})-(\ref{eq-b17}) is given by potential
\begin{equation}
u(x,t)=\int^{t}_{0}\int_{Q}\varepsilon_{1,n}(x-\xi,
t-\tau)f(\xi,\tau)d\xi d\tau. \label{eq-b11}
\end{equation}
\end{remark}

\vspace{5mm}

\begin{lemma}\label{l-1}
 The function
\begin{equation}u(x,t)=\int^{t}_{0}\int_{\partial Q}\frac{\partial G(x, \xi, t,
\tau)}{\partial n_{\xi}}\varphi(\xi,\tau)d\xi d\tau \label{eq-c1}
\end{equation}
is an unique solution of the following problem
\begin{equation}
\diamondsuit u(x,t)=0, (x,t)\in\Omega, \label{eq-c2}
\end{equation}
with initial condition
\begin{equation}
u(x,0)=0 \label{eq-c3}
\end{equation}
and with inhomogeneous non-local type boundary condition
\begin{multline}
-\frac{u(x,t)}{2}+\int^{t}_{0}\int_{\partial Q}\frac{\partial
\varepsilon_{1,n}(x-\xi, t-\tau)}{\partial n_{\xi}}
u(\xi,\tau)dS_{\xi}d\tau \\
-\int^{t}_{0}\int_{\partial Q}\frac{\partial u(\xi,\tau)}{\partial
n_{\xi}}\varepsilon_{,n}(x-\xi, t-\tau)dS_{\xi}d\tau=\varphi(x,
t),
\\ (x,t)\in
\partial Q\times (0,T), \label{eq-c4}
\end{multline}
where $\frac{\partial}{\partial n_{\xi}}$ denotes the exterior
normal derivative on the boundary $\partial Q$ and $G(x, \xi, t,
\tau)$ is the Green function of the problem
(\ref{eq-b15})-(\ref{eq-b16}) with the Dirichlet boundary
condition.
\end{lemma}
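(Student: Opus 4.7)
My plan is to reduce the non-local boundary value problem (\ref{eq-c2})--(\ref{eq-c4}) to the classical Dirichlet problem for the heat equation in $Q$, which is then solved via the Dirichlet Green function $G$.

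The first step is to establish that for any $u \in C^{2+\gamma,1+\gamma/2}_{x,t}(\overline{\Omega})$ satisfying $\diamondsuit u = 0$ in $\Omega$ with $u(x,0) = 0$, the left-hand side of the non-local boundary condition (\ref{eq-c4}) equals $-u(x,t)$ on $\partial Q \times (0,T)$. I would derive this by repeating, for $m = 1$ and $f \equiv 0$, the integration-by-parts calculation used in the proof of Theorem \ref{th-2}: starting from $0 = \int_0^t \int_Q \varepsilon_{1,n}(x-\xi,t-\tau) \diamondsuit u(\xi,\tau) d\xi d\tau$ and integrating by parts in $\tau$ and $\xi$ (using $u(\xi,0) = 0$, the reproducing property $\int_Q \varepsilon_{1,n}(x-\xi,\alpha)u(\xi,t-\alpha)d\xi \to u(x,t)$ as $\alpha \to 0^+$, and $\diamondsuit^+_{\xi,\tau}\varepsilon_{1,n} = 0$ classically), one obtains the interior identity
\[ u(x,t) = \int_0^t \int_{\partial Q} \varepsilon_{1,n} \frac{\partial u}{\partial n_\xi} dS_\xi d\tau - \int_0^t \int_{\partial Q} u \frac{\partial \varepsilon_{1,n}}{\partial n_\xi} dS_\xi d\tau, \quad x \in Q. \]
Passing $x \to x_0 \in \partial Q$ from inside and invoking the same double-layer jump $-u(x_0,t)/2$ that is implicit in the proof of Theorem \ref{th-2}, this rearranges to exhibit exactly minus the left-hand side of (\ref{eq-c4}) at $(x_0, t)$, so (\ref{eq-c4}) is equivalent to the Dirichlet condition $u|_{\partial Q \times (0,T)} = -\varphi$.

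Next I would verify that the function (\ref{eq-c1}) solves the reduced Dirichlet problem $\diamondsuit u = 0$, $u(x,0) = 0$, $u|_{\partial Q} = -\varphi$. Differentiation under the integral sign gives $\diamondsuit u = 0$ in $\Omega$ (since the singularity of $G$ at $x = \xi$ lies on $\partial Q$, outside $Q$), and the initial condition is immediate. For the boundary value I would invoke the classical Poisson-kernel representation for the heat equation: the Dirichlet problem $\diamondsuit u = 0$, $u(x,0) = 0$, $u|_{\partial Q} = g$ is uniquely solved by $u(x,t) = -\int_0^t \int_{\partial Q} g(\xi,\tau) \frac{\partial G(x,\xi,t,\tau)}{\partial n_\xi} dS_\xi d\tau$ (see, e.g., \cite{9}, \cite{10}). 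Taking $g = -\varphi$ recovers exactly (\ref{eq-c1}) and yields $u|_{\partial Q} = -\varphi$, whence the first step gives the desired non-local condition (\ref{eq-c4}).

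Uniqueness follows from Theorem \ref{th-2} applied to the difference $w = u_1 - u_2$ of two putative solutions: $w$ satisfies $\diamondsuit w = 0$, $w(x,0) = 0$, and the homogeneous version $\varphi \equiv 0$ of (\ref{eq-c4}), which coincides with (\ref{eq-14}) for $m = 1$ and $f \equiv 0$, so Theorem \ref{th-2} forces $w \equiv 0$. The main technical subtlety of the argument is the sign bookkeeping in the first step: the equivalence between the non-local boundary condition and $u|_{\partial Q} = -\varphi$ (rather than $u|_{\partial Q} = +\varphi$) is what reconciles the plus sign in (\ref{eq-c1}) with the classical Dirichlet--Green representation, and the double-layer jump sign must be taken consistently with that used in the proof of Theorem \ref{th-2}; once this is handled correctly, the remaining steps are routine.
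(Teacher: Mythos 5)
Your proof follows the same route as the paper's: Green's identity for the heat operator together with the interior jump of the double--layer potential reduces the non-local condition (\ref{eq-c4}) to a Dirichlet condition, which is then solved by the Green-function (Poisson) representation, with uniqueness obtained from the homogeneous case. The one point of divergence is the sign of the reduced Dirichlet datum: the paper's proof asserts $u|_{\partial Q\times(0,T)}=\varphi$, while you obtain $u|_{\partial Q\times(0,T)}=-\varphi$; your sign is the one consistent with the Green identity as it is actually carried out in the proof of Theorem \ref{th-2} (there the boundary terms enter as $+\int \frac{\partial \varepsilon}{\partial n_\xi}u - \int \varepsilon\frac{\partial u}{\partial n_\xi}$, so that $\diamondsuit u=0$ and $u(\cdot,0)=0$ yield $u = V - W$ in $Q$ rather than $W - V$, and the jump $W_{\mathrm{int}}=W_{\mathrm{direct}}-u/2$ then makes the left side of (\ref{eq-c4}) equal to $-u$ on the boundary) and with the standard representation $w=-\int_0^t\int_{\partial Q}\frac{\partial G}{\partial n_\xi}\,g\,dS_\xi d\tau$ of the Dirichlet problem. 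The two minus signs cancel and reproduce (\ref{eq-c1}) exactly as you observe, so your bookkeeping is in fact the internally consistent version of the paper's argument.
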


\begin{proof}
$$
0=\int^{t}_{0}\int_{Q}\varepsilon_{1,n}(x-\xi, t-\tau)0d\xi
d\tau=\int^{t}_{0}\int_{Q}\varepsilon_{1,n}(x-\xi,
t-\tau)\diamondsuit u(\xi,\tau)d\xi d\tau=
$$
$$
=u(x,t)+\int^{t}_{0}\int_{Q}\diamondsuit^{+}_{\xi,\tau}\varepsilon_{1,n}(x-\xi,
t-\tau) u(\xi,\tau)d\xi d\tau-\int^{t}_{0}\int_{\partial
Q}\frac{\partial \varepsilon_{1,n}(x-\xi, t-\tau)}{\partial
n_{\xi}} u(\xi,\tau)dS_{\xi} d\tau
$$
$$+\int^{t}_{0}\int_{\partial Q}\varepsilon_{1,n}(x-\xi, t-\tau)\frac{\partial u(\xi,\tau)}{\partial
n_{\xi}}dS_{\xi}d\tau=u(x,t)-\int^{t}_{0}\int_{\partial
Q}\frac{\partial \varepsilon_{1,n}(x-\xi, t-\tau)}{\partial
n_{\xi}} u(\xi,\tau)dS_{\xi} d\tau
$$
$$
+\int^{t}_{0}\int_{\partial Q}\varepsilon_{1,n}(x-\xi,
t-\tau)\frac{\partial u(\xi,\tau)}{\partial n_{\xi}}dS_{\xi}d\tau
$$
for all $(x,t)\in Q\times(0,T).$ Hence,

$$
u(x,t)=\int^{t}_{0}\int_{\partial Q}\frac{\partial
\varepsilon_{1,n}(x-\xi, t-\tau)}{\partial n_{\xi}}
u(\xi,\tau)dS_{\xi} d\tau-\int^{t}_{0}\int_{\partial
Q}\varepsilon_{1,n}(x-\xi, t-\tau)\frac{\partial
u(\xi,\tau)}{\partial n_{\xi}}dS_{\xi}d\tau
$$
for all $(x,t)\in Q\times(0,T).$ And
$$
u(x,t)=-\frac{u(x,t)}{2}+\int^{t}_{0}\int_{\partial
Q}\frac{\partial \varepsilon_{1,n}(x-\xi, t-\tau)}{\partial
n_{\xi}} u(\xi,\tau)dS_{\xi} d\tau
$$
$$
-\int^{t}_{0}\int_{\partial Q}\varepsilon_{1,n}(x-\xi,
t-\tau)\frac{\partial u(\xi,\tau)}{\partial n_{\xi}}dS_{\xi}d\tau
$$
as $(x,t)\rightarrow\partial Q\times(0,T).$ Since,
$$
u(x,t)=-\frac{u(x,t)}{2}+\int^{t}_{0}\int_{\partial
Q}\frac{\partial \varepsilon_{1,n}(x-\xi, t-\tau)}{\partial
n_{\xi}} u(\xi,\tau)dS_{\xi} d\tau
$$

$$
-\int^{t}_{0}\int_{\partial Q}\varepsilon_{1,n}(x-\xi,
t-\tau)\frac{\partial u(\xi,\tau)}{\partial
n_{\xi}}dS_{\xi}d\tau=\varphi(x,t)
$$
for all $(x,t)\in \partial Q\times(0,T).$ So, problem
(\ref{eq-c2})-(\ref{eq-c4}) equivalent to the problem for
homogeneous heat equation (\ref{eq-c2}) with initial condition
(\ref{eq-c3}) and with the following Dirichlet boundary condition
$$
u(x,t)=\varphi(x,t)
$$
for all $(x,t)\in \partial Q\times(0,T).$ The lemma is proved.
\end{proof}

From lemma \ref{l-1} and remark \ref{rem-2} follows statement of
the following theorem.

\vspace{5mm}

\begin{theorem} \label{th-b2}
The solution of the problem (\ref{eq-b15})-(\ref{eq-b17}) is given
by the following formula
\begin{equation}
u(x,t)=\int^{t}_{0}\int_{Q}\varepsilon_{1,n}(x-\xi,
t-\tau)f(\xi,\tau)d\xi d\tau+\int^{t}_{0}\int_{\partial
Q}\frac{\partial G(x, \xi, t, \tau)}{\partial
n_{\xi}}\varphi(\xi,\tau)d\xi d\tau, \label{eq-b21}
\end{equation}
where $G(x, \xi, t, \tau)$ is the Green function of the problem
(\ref{eq-b15})-(\ref{eq-b16}) with the Dirichlet boundary
condition.
\end{theorem}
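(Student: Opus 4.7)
The plan is to obtain Theorem \ref{th-b2} as a straightforward superposition argument, exploiting the linearity of the equation (\ref{eq-b15}), the initial condition (\ref{eq-b16}), and the boundary operator appearing on the left-hand side of (\ref{eq-b17}). Set
\[
u_1(x,t)=\int^{t}_{0}\!\!\int_{Q}\varepsilon_{1,n}(x-\xi,t-\tau)f(\xi,\tau)\,d\xi\,d\tau,\qquad u_2(x,t)=\int^{t}_{0}\!\!\int_{\partial Q}\frac{\partial G(x,\xi,t,\tau)}{\partial n_{\xi}}\varphi(\xi,\tau)\,dS_{\xi}\,d\tau,
\]
and show that $u=u_1+u_2$ satisfies (\ref{eq-b15})--(\ref{eq-b17}); uniqueness will then come from Theorem \ref{th-2} applied to the difference of two candidate solutions.

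First I would invoke Remark \ref{rem-2} (the case $m=1$ of Theorem \ref{th-2}) to conclude that $u_1$ satisfies the inhomogeneous heat equation $\diamondsuit u_1=f$ in $\Omega$, the initial condition $u_1(x,0)=0$, and the \emph{homogeneous} non-local boundary relation, i.e.\ the left-hand side of (\ref{eq-b17}) with $u$ replaced by $u_1$ is identically zero on $\partial Q\times(0,T)$. Next I would apply Lemma \ref{l-1} to $u_2$, which gives $\diamondsuit u_2=0$ in $\Omega$, $u_2(x,0)=0$, and that the non-local boundary operator applied to $u_2$ equals $\varphi(x,t)$ on $\partial Q\times(0,T)$.

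Adding these two results, and using that both the differential operator $\diamondsuit$ and the boundary operator
\[
L[u](x,t):=-\tfrac{u(x,t)}{2}+\int_0^t\!\!\int_{\partial Q}\frac{\partial \varepsilon_{1,n}(x-\xi,t-\tau)}{\partial n_\xi}u(\xi,\tau)\,dS_\xi d\tau-\int_0^t\!\!\int_{\partial Q}\varepsilon_{1,n}(x-\xi,t-\tau)\frac{\partial u(\xi,\tau)}{\partial n_\xi}\,dS_\xi d\tau
\]
are linear in $u$, I obtain $\diamondsuit u=f$, $u(x,0)=0$, and $L[u]=\varphi$ on $\partial Q\times(0,T)$, so the right-hand side of (\ref{eq-b21}) indeed solves (\ref{eq-b15})--(\ref{eq-b17}).

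For uniqueness, suppose $\tilde u$ is another solution; then $w=\tilde u-u$ satisfies $\diamondsuit w=0$, $w(x,0)=0$, and $L[w]=0$, which is precisely the homogeneous problem covered by Theorem \ref{th-2} with $m=1$, forcing $w\equiv 0$. The only delicate point I anticipate is the smoothness and boundary regularity of $u_2$: one must know that $\partial G/\partial n_\xi$ yields a double-layer-type potential whose jump relations on $\partial Q$ reproduce $\varphi$ correctly (in the sense used in Lemma \ref{l-1}), and that $u_2$ has enough regularity for $L[u_2]$ to be defined classically. Since Lemma \ref{l-1} has already been established with the required smoothness of $\varphi$, this technical step is absorbed into the reference to that lemma, and no further calculation is required.
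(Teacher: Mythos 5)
Your proposal is correct and follows essentially the same route as the paper, which simply states that Theorem \ref{th-b2} ``follows from Lemma \ref{l-1} and Remark \ref{rem-2}''; your superposition argument with the linearity of $\diamondsuit$ and of the non-local boundary operator is exactly the intended (and here explicitly written-out) reasoning, including the uniqueness step via the $m=1$ case of Theorem \ref{th-2}.
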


\vspace{5mm}

\vspace{15mm}

Durvudkhan Suragan,

Institute of mathematics and mathematical modeling, st. Shevchenko
28, Almaty, Kazakhstan and Al-Farabi Kazakh National University,
ave. al-Farabi 71, Almaty, Kazakhstan.

E-mail: suragan@list.ru

\vspace{5mm}

Niyaz Tokmagambetov,

Institute of mathematics and mathematical modeling, st. Shevchenko
28, Almaty, Kazakhstan and Al-Farabi Kazakh National University,
ave. al-Farabi 71, Almaty, Kazakhstan.

E-mail: tokmagam@list.ru


\begin{thebibliography}{11}

\bibitem{12} Givoli D., Recent advances in the DtN finite element
method for unbounded domains, Arch. Comput. Methods Eng.,1999, 6,
71–116.

\bibitem{13} Hagstrom T., Radiation boundary conditions for the
numerical simulation of waves, Acta Numer., 1999, 8, 47–106.

\bibitem{14} Tsynkov S.V., Numerical solution of problems on unbounded
domains, Appl. Numer. Math.,1998, 27, 465–532.

\bibitem{15} Givoli D., Non-reflecting boundary conditions: a review,
J. Comput. Phys., 1991, 94, 1–29.

\bibitem{16} Xiaonan Wu, Hong Kong, High-order local absorbing boundary conditions for heat equation in unbounded domains, Journal of Computational Mathematics, 2011, 1(29), 74–90.

\bibitem{1} Engquist B. and Majda A., Radiation boundary conditions for
acoustic and elastic wave calculations, Comm. Pure Appl. Math.,
1979, 32, 313--357

\bibitem{2} Engquist B. and Majda A., Radiation boundary conditions for
acoustic and elastic wave calculations, Comm. Pure Appl. Math.,
1979, 32, 313--357

\bibitem{3} Hedstrom  G. W., Nonreflecting boundary conditions for
nonlinear hyperbolic systems, J. Comput. Phys., 1979, 30, 222--237

\bibitem{4} Givoli D., Numerical Methods for Problems in
Infinite Domains, Elsevier, Amsterdam, 1992

\bibitem{5} Li J-R, Greengard L., On the numerical solution of
the heat equation I: Fast solvers in free space, J. Comput. Phys.,
2007, 226, 1891–-1901

\bibitem{6} Kalmenov  T.Sh., Suragan D., To Spectral Problems for the
Volume Potential, Doklady Mathematics, 2008, 3(78), 913--915

\bibitem{7} Kalmenov  T.Sh., Suragan D., A Boundary Condition and
Spectral Problems for the Newton Potentials, Operator Theory:
Advances and Applications, 2011, 216, 187--210

\bibitem{17} Kalmenov  T.Sh., Suragan D., Boundary Conditions of volume potential for the polyharmonic equation, Differential Equation, 2012, vol.
48, No 4, 604--608

\bibitem{18} Kalmenov  T.Sh., Suragan D., A transfer of the Sommerfeld radiation condition to a boundary of bounded domains, Zhurnal Vychislitel'noi Matematiki i Matematicheskoi Fiziki, 2012, vol. 52, No 6,
1063--1068 (in russian)

\bibitem{8} Ditkowski A., Suhov A., Near-field infinity-simulating
boundary conditions for the heat equation, Proc. Natl. Acad. Sci.
USA, 2008, 105(31), 10646-10648

\bibitem{9} Krylov  N.V., Lectures on Elliptic and Parabolic Equations
in Holder Spaces, Graduate Studies in Mathematics, Vol.12,
Providence: Amer. Math. Soc., 1996

\bibitem{10} Friedman A., Partial differential equations of parabolic
type, Prentice-Hall, 1964

\bibitem{11} Hsiao G.C., Wendland W.L., Boundary Integral
Equations, Berlin, Springer, 2008

\bibitem{19} Kamynin L.I., On smoothness of heat potentials. 2,
Differentsialnye uravneniya, 1966, Vol. 2 (5), 647--687 [in
russian]


\end{thebibliography}
\end{document}